\documentclass[11pt]{amsart}
\usepackage[usenames, dvipsnames, table]{xcolor}
\usepackage{amssymb, tikz, tikz-cd, pgfplots, tabto, bm, bbm, url, mathdots, adjustbox} 
\usepackage{verbatim}
\usepackage{hyperref}
\usepackage{enumitem}

\usepackage{geometry}
\theoremstyle{definition}
\newtheorem{theorem}{Theorem}[section]
\newtheorem{example}[theorem]{Example}

\newtheorem{definition}[theorem]{Definition}

\newtheorem{proposition}[theorem]{Proposition}

\newtheorem*{ack}{Acknowledgment}

\numberwithin{equation}{section}

\allowdisplaybreaks

\newcommand{\la}{\scalebox{1.9}[1]{$\langle$}}
\newcommand{\laa}{\la\hspace{-1.9mm}\la}
\newcommand{\ra}{\scalebox{1.9}[1]{$\rangle$}}
\newcommand{\raa}{\ra\hspace{-1.9mm}\ra}

\newcommand{\del}{\partial}
\newcommand{\delbar}{\overline{\partial}}

\newcommand{\bu}{\bullet}
\newcommand{\mc}{\mathcal }
\newcommand{\mC}{{\mathcal C}}

\newcommand{\mcS}{\mathcal{S}}
\newcommand{\uo}{^{(1)}} 
\newcommand{\ut}{^{(2)}} 
\newcommand{\al}{\alpha}
\newcommand{\be}{\beta}
\newcommand{\ga}{\gamma}
\newcommand{\de}{\delta}
\newcommand{\ka}{\kappa}
\newcommand{\om}{\omega}
\newcommand{\wt}{\widetilde}

\newcommand{\HA}{H_{\mathcal A}}
\newcommand{\HBC}{H_{\mathcal B\mathcal C}}
\newcommand{\xb}{\overline{x}}
\newcommand{\yb}{\overline{y}}
\newcommand{\zb}{\overline{z}}
\newcommand{\wb}{\overline{w}}
\newcommand{\ct}{\widetilde{c}}

\title{Cyclic ABC Massey products}

\author[T.~Tradler]{Thomas~Tradler}
  \address{Thomas Tradler,
  Department of Mathematics, New York City College of Technology, City University of New York, 300 Jay Street, Brooklyn, NY 11201}
  \email{ttradler@citytech.cuny.edu}

\author[S.O.~Wilson]{Scott O.~Wilson}
  \address{Scott O. Wilson,
  Department of Mathematics, Queens College, City University of New York, 65-30 Kissena Blvd, Flushing, NY 11367}
  \email{scott.wilson@qc.cuny.edu}

\keywords{Massey product, Bigrading, Cyclic product}
\subjclass[2020]{55S30, 32Q99 (primary), 55U99 (secondary)}

\begin{document}
\maketitle
\begin{abstract} 
This paper refines the notion of cyclic Massey products to the bi-graded setting, just as quadruple ABC Massey products refine the notion of quadruple Massey products. The result, we call ``cyclic ABC Massey products,'' are in general non-trivial and contain information different from the quadruple ABC Massey products.
\end{abstract}

\setcounter{tocdepth}{1} 

\section{Introduction}
In this note we define certain higher order cyclic Massey products in the bi-graded setting. Massey products are defined for differential graded associative algebras as higher order cohomology  operations, which are related to detecting higher knotting of links \cite{Mas58,Mas68}.

When an algebra is a \emph{commutative} differential graded algebra (CDGA), one can apply cyclicity considerations to define further variants of these Massey products that make use of commutativity. In fact, for any non-negative integers $k,\ell$ the authors defined a higher cyclic Massey product $\la.\ra_{k,\ell}$ in \cite[section 5.1]{PTW}; the case of $k=\ell=1$ is recalled in section \ref{SEC:cyclic-CDGA} below. The cyclic Massey products are not independent since $\la.\ra_{k,\ell}$ can essentially be expressed in terms of $\la.\ra_{\ell,k}$ via a cyclicity relation, and, in the case $\ell=0$ they reduce to the usual $(k+2)$th Massey products.

A commutative bi-differential bi-graded algebra (CBBA) has a more appropriate notion of Massey product, due to Angella and Tomassini \cite{AT},  which uses the extra bi-differential structure to obtain a more refined version of the triple Massey product. These are called ABC Massey products, as they take three Bott-Chern cohomology classes as inputs and gives a subset of Aeppli cohomology as an output. In \cite{SM}, it is shown that these ideas extend to quadruple ABC Massey product, and in principle extend to even higher versions. It's natural then to expect that the family of cyclic Massey products for CDGAs also have a refined notion of cyclic ABC Massey products $\laa.\raa_{k,\ell}$ for CBBAs. This was the question posed by the second author at the Critical Math Workshop, after which the first author supplied the details in this paper, making it explicit in the case $k =\ell = 1$. We'll denote $\laa.\raa_{1,1}$ simply by $\laa.\raa$.

In section \ref{SEC:cyclic-CBBA} we show that for any four Schweitzer cohomology classes $[\al_i]\in H^{1}(\mcS_{p_i,q_i}), i=1,2,3,4$, the cyclic ABC Massey product is a subset $\laa[\al_1],[\al_2],[\al_3],[\al_4]\raa\subseteq H^{-1}(\mcS_{p,q})$, which is natural with respect to morphisms of CBBAs, and which satisfies the expected cyclicity condition \eqref{EQU:cyclicity}. These cyclic ABC Massey products map to the usual cyclic Massey products via the following inclusion; see proposition \ref{PROP:ABC-regular}
\begin{equation*}
 \frac{(-1)^{|\al|+|\ga|}}{4} (\del-\delbar) \Big(\laa[\al],[\be],[\ga],[\de]\raa\Big)\subseteq \la [\al],[\be],[\ga],[\de])\ra.
\end{equation*}
We give a non-trivial example of the cyclic ABC Massey product using the holomorphic analogue of the filiform nilmanifold in example \ref{EXA:holomorphic-filiform}.

This paper did not use AI in any form, including its conception, proofs, or writing. It might be interesting to use AI to construct nontrivial examples of compact manifolds with non-trivial cyclic ABC-Massey products.  

\begin{ack}
We thank Alexander Milivojevi\'c for useful conversations concerning the topic of this paper. The first author  was partially supported by a PSC-CUNY Award. The second author acknowledges support provided by the Simons Foundation Award program for Mathematicians and a PSC-CUNY Award, jointly funded by
The Professional Staff Congress and The City University of New York. 
\end{ack}

\section{Cyclic Massey product for a CDGA}\label{SEC:cyclic-CDGA}

We recall the basics of cyclic Massey products for a CDGA. More precisely, we spell out the specifics of the $k=1$ and $\ell=1$ case as defined in \cite[Section 5.1]{PTW}.

Let $(A^\bu,\cdot, d:A^\bu\to A^{\bu+1})$ be a CDGA, and denote by $H^\bu(A)=\frac{Ker \,d}{Im \, d}$ its cohomology. Let $\al, \be, \ga,\de\in A$ be closed elements in $A$, $d\al=d\be=d\ga=d\de=0$. Then, a cyclic defining system $S$ for $\al, \be, \ga,\de$ is given by elements $\kappa, \lambda, \mu,\nu, \eta,\varrho\in A$ satisfying
\begin{equation}\label{EQU:cyclic-defining-system}
\left\{
\begin{matrix}
d\kappa=(-1)^{|\al|}\cdot \al\cdot \be \\
d\lambda=(-1)^{|\be|}\cdot \be\cdot \ga\\
d\mu=(-1)^{|\ga|}\cdot \ga\cdot \de\\
d\nu=(-1)^{|\de|}\cdot \de\cdot \al
\end{matrix}
\hspace{1cm}
\begin{matrix}
d\eta=(-1)^{|\kappa|}\cdot \kappa\cdot \ga+(-1)^{|\al|}\cdot \al\cdot \lambda \\
d\varrho=(-1)^{|\mu|}\cdot \mu\cdot \al+(-1)^{|\ga|}\cdot \ga\cdot \nu
\end{matrix}
\hspace{1cm}
\begin{tabular}{ccccc}  \cline{3-5}
& \multicolumn{1}{c|}{} & \multicolumn{1}{c|}{$\varrho$}  & \multicolumn{1}{c|}{$\nu$}  & $\al$ \\   \cline{3-4}
& \multicolumn{1}{c|}{} & \multicolumn{1}{c|}{$\mu$}  & $ \de$ \\ \cline{1-3}
\multicolumn{1}{|c|}{$\eta$} & \multicolumn{1}{c|}{$\lambda$} & $\ga$ &  \\ \cline{1-2}
\multicolumn{1}{|c|}{$\kappa$} & $\be$ & &   \\ \cline{1-1} 
\multicolumn{1}{|c}{$\al$} &  &  &
\end{tabular}
\right.
\end{equation}
Here, we require that the above are equations of homogeneous degrees, i.e., if $\al$ is of degree $|\al|$, etc., then the degree of $\kappa$ is $|\kappa|=|\al|+|\be|-1$, etc., and the degree of $\eta$ is $|\eta|=|\kappa|+|\ga|-1=|\al|+|\lambda|-1$, etc.

With this, \cite[Proposition 5.3]{PTW} states that there is an induced map 
\begin{equation}
\la.\ra:H^{r_1}(A)\times H^{r_2}(A)\times H^{r_3}(A)\times H^{r_4}(A)\to \mc P(H^{r_1+r_2+r_3+r_4}(A))
\end{equation}
taking four cohomology classes and mapping them to a subset of cohomology, given by
\begin{align}
&\la[\al],[\be],[\ga],[\de]\ra :=\{[\mc C_S]:
S \text{ is a cyclic defining system for } \al, \be, \ga,\de\},\quad\text{where}\\
&\quad \mc C_S:=(-1)^{|\eta|+1} \eta\cdot \de +(-1)^{|\be|+\epsilon_0} \be \cdot\varrho+
(-1)^{|\kappa|+1} \kappa\cdot \mu+(-1)^{|\lambda|+\epsilon_0}\lambda\cdot \nu
\end{align}
and $(-1)^{\epsilon_0}
=(-1)^{|{\al}|\cdot (|{\be}|+|{\ga}|+|{\de}|)+|{\al}|+|{\be}|+|{\ga}|+|{\de}|}$.

By \cite[Proposition 5.4]{PTW}, if $f:A\to B$ is a map of CDGAs, then we have an inclusion
\begin{equation}\label{EQU:CDGA-naturality}
f_*\big(\la [\al],[\be],[\ga],[\de] \ra\big) \subseteq \la [f(\al)],[f(\be)],[f(\ga)],[f(\de)]\ra\text{ in }H^\bu(B).
\end{equation}

\begin{proposition}\label{PROP:qi=>cyclicMIP_equality}
If $f:A\to B$ is a quasi-isomorphism, then the inclusion \eqref{EQU:CDGA-naturality} is an equality.
\end{proposition}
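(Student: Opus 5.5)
To prove the reverse inclusion, I fix closed $\al,\be,\ga,\de\in A$ and a cyclic defining system $S'=(\kappa',\lambda',\mu',\nu',\eta',\varrho')$ in $B$ for $f(\al),f(\be),f(\ga),f(\de)$. Since the cyclic Massey product depends only on cohomology classes (\cite[Proposition 5.3]{PTW}), it suffices to do the following. I will construct a cyclic defining system $S$ in $A$ for $\al,\be,\ga,\de$ with $[f(\mc C_S)]=[\mc C_{S'}]$. The construction lifts the defining system level by level, using two properties of the quasi-isomorphism $f$. First, $f_*$ is injective on cohomology, so a closed $a\in A$ with $f(a)$ exact is itself exact. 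Second, $f_*$ is surjective, together with the standard "relative" lifting: if $a\in A$ is closed and $f(a)=db$, then one can write $a=dx$ with $f(x)=b+dy+c$ for some closed $c\in B$ whose class is chosen as needed. I will use the mapping cone of $f$, which is acyclic, to produce such pairs $(x,y)$ directly.

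\textbf{First level.} The element $(-1)^{|\al|}\al\be$ is closed in $A$ and $f$ of it equals $d\kappa'$. By acyclicity of the cone of $f$, there are $\kappa\in A$ and $y_\kappa\in B$ with $d\kappa=(-1)^{|\al|}\al\be$ and $f(\kappa)-\kappa'=dy_\kappa$. The same argument gives $\lambda,\mu,\nu$ together with $y_\lambda,y_\mu,y_\nu$. Concretely, the cone condition says that the cycle $\big((-1)^{|\al|}\al\be,\kappa'\big)$ in $\mathrm{Cone}(f)$ is a boundary, and the boundary relation yields exactly $\kappa$ and $y_\kappa$.

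\textbf{Second level.} The element $z=(-1)^{|\kappa|}\kappa\ga+(-1)^{|\al|}\al\lambda$ is closed in $A$, by the first-level equations. Its image is
\begin{equation*}
f(z)=d\eta' + d\Big((-1)^{|\kappa|}y_\kappa f(\ga)+(-1)^{|\al|}(-1)^{|\al|} f(\al)y_\lambda\Big),
\end{equation*}
with signs to be checked. So again $(z,\eta'+w)$ is a cone cycle, where $w$ is that explicit correction term. Acyclicity gives $\eta\in A$ and $y_\eta\in B$ with $d\eta=z$ and $f(\eta)=\eta'+w+dy_\eta$. The same construction produces $\varrho$. Thus $S=(\kappa,\lambda,\mu,\nu,\eta,\varrho)$ is a cyclic defining system in $A$.

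\textbf{Comparing the cocycles.} It remains to show that $f(\mc C_S)-\mc C_{S'}$ is exact. Substituting the expressions for $f(\kappa),\dots,f(\varrho)$ produces two kinds of terms:
\begin{itemize}[label={}]
\item terms of the form $dy\cdot(\text{closed})$, which are exact;
\item cross terms such as $w f(\de)$, $y_\kappa d y_\mu$, and terms like $y_\kappa\mu'$.
\end{itemize}
I expect this bookkeeping to be the main obstacle. One must show that the leftover sum is $d$ of an explicit element built from products like $y_\kappa\mu'$, $y_\kappa y_\mu$, $y_\eta f(\de)$ and $f(\be)y_\varrho$. This uses commutativity, precisely as in the proof in \cite{PTW} that $[\mc C_S]$ is independent of modifications of the defining system by exact terms.

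To organize the computation, I would first show that changing $S'$ by $\kappa'\mapsto\kappa'+dy_\kappa$, together with the compensating change $\eta'\mapsto\eta'+(-1)^{|\kappa|}y_\kappa\ga$ (with appropriate signs), changes $\mc C_{S'}$ by an exact term. Then I would apply this to each first-level element in turn. After these reductions $f(S)$ and $S'$ agree at the first level, and differ at the second level only by exact elements $dy_\eta,dy_\varrho$. Those remaining differences change $\mc C$ only by $d(\pm y_\eta f(\de)\pm f(\be)y_\varrho)$, which is exact. Hence $[\mc C_{S'}]=f_*[\mc C_S]$, which gives the reverse inclusion and therefore equality.
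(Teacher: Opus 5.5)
Your proposal is correct and follows essentially the same route as the paper: you lift the defining system level by level using the quasi-isomorphism, and you replace $S'$ by successive modifications ($\kappa'\mapsto\kappa'+dy_\kappa$ with the compensating change of $\eta'$, likewise for $\lambda',\mu',\nu'$, and finally $\eta',\varrho'$ by exact terms), each of which changes $\mc C_{S'}$ only by an exact term such as $d((-1)^{|\kappa|+1}y_\kappa\mu')$. The only cosmetic difference is that you get each lift together with its correction $y$ in one step from acyclicity of the mapping cone, whereas the paper first lifts via injectivity on cohomology and then adjusts by a closed element via surjectivity.
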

\begin{proof}
Let $\wt\kappa, \wt\lambda, \wt\mu,\wt\nu, \wt\eta,\wt\varrho\in B$ be a cyclic defining system for $\wt\al=f(\al), \wt\be=f(\be), \wt\ga=f(\ga),\wt\de=f(\de)$ denoted by $\wt S$. Since $f(\al\cdot \be)=f(\al)\cdot f(\be)=(-1)^{|\al|}d(\wt\kappa)$ and $f$ is a quasi-isomorphism, $\al\cdot \be$ is exact, and so there exists a $\kappa\in A$ with $d(\kappa)=(-1)^{|\al|}\al\cdot \be$. Since $d(\wt\kappa-f(\kappa))=0$, there exists a closed $\ka^c\in A$ such that $[f(\ka^c)]= [\wt\ka-f(\ka)]$, i.e., their representatives differ by some exact term, $f(\ka+\ka^c)-\wt\ka=d(\wt\ka^\circ)$ for some $\wt\ka^\circ\in B$. Setting $\ka_1=\ka+\ka^c$ and ${\wt\ka}_1=\wt\ka+ d(\wt\ka^\circ)$ gives $d(\ka_1)=(-1)^{|\al|}\al\cdot \be$, $d({\wt\ka}_1)=(-1)^{|\al|}f(\al)\cdot f(\be)$, and $f(\ka_1)=\wt\ka_1$. Moreover, setting $\wt\eta_1=\wt\eta+(-1)^{|\wt\ka|}\wt \ka^\circ\cdot \wt\ga$ makes $\wt\kappa_1, \wt\lambda, \wt\mu,\wt\nu, \wt\eta_1,\wt\varrho$ into a new cyclic defining system $\wt S_1$ for $f(\al), f(\be), f(\ga),f(\de)$ with $[\mc C_{\wt S_1}]=[\mc C_{\wt S}]$, since the difference $\mc C_{\wt S_1}-\mc C_{\wt S}=(-1)^{|\wt\eta|+1+|\wt\ka|}\cdot \wt\ka^\circ\cdot \wt\ga\cdot \wt\de+(-1)^{|\wt\kappa|+1}\cdot d(\wt\ka^\circ)\cdot \wt\mu=d((-1)^{|\wt\kappa|+1}\cdot \wt\ka^\circ\cdot \wt\mu)$ is exact, where we used that $(-1)^{|\wt\eta|+1+|\wt\kappa|}=(-1)^{|\wt\ga|}$.

Repeating the above for $\wt\lambda$, $\wt\mu$ and $\wt\nu$, we end up with elements $\wt\kappa_2, \wt\lambda_2, \wt\mu_2,\wt\nu_2, \wt\eta_2,\wt\varrho_2\in B$, which form a new cyclic defining system $\wt S_2$ for $f(\al), f(\be), f(\ga),f(\de)$ with $[\mc C_{\wt S}]=[\mc C_{\wt S_2}]$, as well as elements $\kappa_2, \lambda_2, \mu_2,\nu_2 \in A$ which satisfy the left four equations in \eqref{EQU:cyclic-defining-system} and  $f(\kappa_2)=\wt\kappa_2,f(\lambda_2)= \wt\lambda_2, f(\mu_2)=\wt\mu_2,f(\nu_2)=\wt\nu_2$. Now, just as above, $f((-1)^{|\kappa|}\cdot \kappa_2\cdot \ga+(-1)^{|\al|}\cdot \al\cdot \lambda_2)=d(\wt\eta_2)$ shows that there exists an $\eta\in A$ with $d\eta=(-1)^{|\kappa|}\cdot \kappa_2\cdot \ga+(-1)^{|\al|}\cdot \al\cdot \lambda_2$. Then, $d(\wt\eta_2-f(\eta))=0$, thus there exists a closed $\eta^c\in A$ and there exists $\wt\eta^\circ\in B$ with $f(\eta+\eta^c)-\wt\eta_2=d(\wt\eta^\circ)$. Setting $\eta_3=\eta+\eta^c$ satisfies $d\eta_3=d\eta=(-1)^{|\kappa|}\cdot \kappa_2\cdot \ga+(-1)^{|\al|}\cdot \al\cdot \lambda_2$ and setting $\wt\eta_3= \wt\eta_2+d(\wt\eta^\circ)$ satsfies $f(\eta_3)=\wt\eta_3$ and gives a cyclic defining system $\wt S_3$ given by $\wt\kappa_2, \wt\lambda_2, \wt\mu_2,\wt\nu_2, \wt\eta_3,\wt\varrho_2$ with $[\mc C_{\wt S}]=[\mc C_{\wt S_3}]$ since the difference $\mc C_{\wt S_3}-\mc C_{\wt S_2}=(-1)^{|\wt \eta|+1}\cdot d(\wt\eta^\circ)\cdot \delta=d((-1)^{|\wt \eta|+1}\cdot \wt\eta^\circ\cdot \delta)$ is exact.

Repeating the above for $\wt\varrho_2$, we obtain a cyclic defining systems $S_4$ and $\wt S_4$ for $\al, \be,\ga,\de$ and $f(\al), f(\be),f(\ga),f(\de)$, repscetively, where the terms of the cyclic defining system map to each other under $f$, and, $[\mc C_{\wt S_4}]=[\mc C_{\wt S}]$. Thus, $f([\mc C_{S_4}])=[f(\mc C_{S_4})]=[\mc C_{\wt S_4}]=[ \mc C_{\wt S}]$ lies in the lefthand side of \eqref{EQU:CDGA-naturality}, showing the opposite inclusion ``$\supseteq$'' in \eqref{EQU:CDGA-naturality}.
\end{proof}

\section{Cyclic ABC Massey product for a CBBA}\label{SEC:cyclic-CBBA}

We now give an elementary account of cyclic ABC Massey products for a CBBA. Let $(A^{\bu,\bu},\cdot, \del:A^{\bu,\bu}\to A^{\bu+1,\bu}, \delbar:A^{\bu,\bu}\to A^{\bu,\bu+1})$ be a CBBA. For a fixed bi-degree $(p,q)$, recall the Schweitzer complex $\mcS_{p,q}=\mcS^\bu_{p,q}(A)$ shown below 
is given as follows:
\[
\begin{adjustbox}{width=\textwidth+12mm}
\begin{tikzcd}
&&&\text{deg }3& \dots\\
&&&\text{deg }2& A^{p,q+2}\arrow[u,"\delbar"]\arrow[r,"\del"]
&  \ddots \\
&&&\text{deg }1& A^{p,q+1}\arrow[u,"\delbar"]\arrow[r,"\del"] 
\arrow[ru, dashed, no head, start anchor={[xshift=-3.8cm, yshift=2.2cm]}, end anchor={[xshift=4.5cm, yshift=-4.5cm]}]
&A^{p+1,q+1}\arrow[u,"\delbar"]\arrow[r,"\del"]&  \ddots \\
\text{deg }-2&\text{deg }-1&\text{deg }0&& A^{p,q}\arrow[u,"\delbar"]\arrow[r,"\del"] 
\arrow[ru, dashed, no head, start anchor={[xshift=-3.8cm, yshift=2.4cm]}, end anchor={[xshift=2cm, yshift=-3.1cm]}]
&A^{p+1,q}\arrow[u,"\delbar"]\arrow[r,"\del"]& A^{p+2,q}\arrow[u,"\delbar"]\arrow[r,"\del"] & \ddots & & \\
\ddots \arrow[r, "\del"]& A^{p-3,q-1}\arrow[r, "\del"] 
\arrow[rrdd, dashed, no head, start anchor={[xshift=-3cm, yshift=2.7cm]}, end anchor={[xshift=3cm, yshift=-1cm]}]
& A^{p-2,q-1} \arrow[r, "\del"] 
\arrow[rd, dashed, no head, start anchor={[xshift=-3cm, yshift=2.7cm]}, end anchor={[xshift=3.2cm, yshift=-1cm]}]
& A^{p-1,q-1}\arrow[ru, "\del\delbar"]  
\arrow[ru, dashed, no head, start anchor={[xshift=-2.8cm, yshift=2.7cm]}, end anchor={[xshift=2.5cm, yshift=-2.5cm]}]
& & \\
& \ddots\arrow[u,"\delbar"]\arrow[r, "\del"] & A^{p-2,q-2}\arrow[u,"\delbar"] \arrow[r, "\del"] & A^{p-1,q-2} \arrow[u,"\delbar"] & & \\
& & \ddots \arrow[u,"\delbar"]\arrow[r, "\del"] & A^{p-1,q-3}\arrow[u,"\delbar"] & &  \\
&&& \ddots\arrow[u,"\delbar"]& & 
\end{tikzcd}
\end{adjustbox}
\]
\[
\mcS_{p,q}^{k}(A)=\left\{
\begin{matrix}
A^{p,q+k-1}\oplus A^{p+1,q+k-2}\oplus A^{p+2,q+k-3}\oplus\dots\oplus A^{p+k-1,q} & \text{, for }k\geq 1 \\
A^{p+k-1,q-1}\oplus A^{p+k,q-2}\oplus A^{p+k+1,q-3}\oplus\dots\oplus A^{p-1, q+k-1} & \text{, for }k\leq 0 
\end{matrix}\right.
\]
Here, the differential $\mcS_{p,q}^0\to \mcS_{p,q}^1$ is given by $\del\delbar$, while for $k\neq 0$, the differential $\mcS_{p,q}^k\to \mcS_{p,q}^{k+1}$ is given by $\del+\delbar$, which in the case $k<0$ is furthermore restricted to the appropriate spaces. Below, we state the Schweitzer cohomology $H^k(\mcS_{p,q})$ in degrees $k=1,0,-1$.
\begin{align*}
H^1(\mcS_{p,q})&=\frac{Ker(\del|_{A^{p,q}}) \cap Ker(\delbar|_{A^{p,q}})}{Im (\del\delbar|_{A^{p-1,q-1}})}
\\
H^0(\mcS_{p,q})&=\frac{Ker (\del \delbar|_{A^{p-1,q-1}})}{Im (\del|_{A^{p-2,q-1}}) + Im (\delbar|_{A^{p-1,q-2}})}
\\
H^{-1}(\mcS_{p,q})&=\frac{\{(x^{(1)},x^{(2)})\in A^{p-2,q-1}\oplus A^{p-1,q-2}\,:\, \del(x^{(1)})+\delbar(x^{(2)})=0\}}{Im (\del|_{A^{p-3,q-1}}) + Im ((\del+\delbar)|_{A^{p-2,q-2}}) + Im (\delbar|_{A^{p-1,q-3}})}
\end{align*}
 Note, that $H^1(\mcS_{p,q})=\HBC^{p,q}(A)$ is the Bott-Chern cohomology, and $H^0(\mcS_{p,q}) =\HA^{p-1,q-1}(A)$ is the Aeppli cohomology, respectively given in appropriate degrees. For better readability, we will sometimes suppress the indices $p$ and $q$ when these are implicitly given.

\begin{definition}\label{DEF:defining-system-C_S}
Let $\al, \be, \ga,\de\in A$ be elements in $A$ which are both $\del$-closed and $\delbar$-closed, $\del\al=\delbar\al=\del\be=\delbar\be =\del\ga=\delbar\ga=\del\de=\delbar\de=0$. Then, a cyclic ABC defining system for $\al, \be, \ga,\de$ is given by elements $\kappa, \lambda, \mu,\nu, \eta,\eta',\varrho,\varrho'\in A$ satisfying
\begin{equation}\label{EQU:cyclic-ABC-defining-system}
\left\{
\begin{matrix}
\del\delbar\kappa= \al\cdot \be \\
\del\delbar\lambda= \be\cdot \ga\\
\del\delbar\mu= \ga\cdot \de\\
\del\delbar\nu= \de\cdot \al
\end{matrix}
\hspace{1cm}
\begin{matrix}
\del\eta+\delbar\eta'= \kappa\cdot \ga-\al\cdot \lambda\\
\del\varrho+\delbar\varrho'= \mu\cdot \al-\ga\cdot \nu
\end{matrix}
\hspace{1cm}
\begin{tabular}{ccccc}  \cline{3-5}
& \multicolumn{1}{c|}{} & \multicolumn{1}{c|}{$\varrho,\varrho'$}  & \multicolumn{1}{c|}{$\,\,\,\nu\,\,\,$}  & $\,\,\,\al\,\,\,$ \\   \cline{3-4}
& \multicolumn{1}{c|}{} & \multicolumn{1}{c|}{$\mu$}  & $ \de$ \\ \cline{1-3}
\multicolumn{1}{|c|}{$\eta, \eta'$} & \multicolumn{1}{c|}{$\,\,\,\lambda\,\,\,$} & $\ga$ &  \\ \cline{1-2}
\multicolumn{1}{|c|}{$\kappa$} & $\be$ & &   \\ \cline{1-1} 
\multicolumn{1}{|c}{$\al$} &  &  &
\end{tabular}
\right.
\end{equation}
We denote the total degree of an element $\om\in A^{p_\om,q_\om}$ by $|\om|=p_\om+q_\om$. Then, we require that the above equations are homogeneous in their bi-degrees, i.e., if $\al\in A^{p_\al,q_\al}$, etc., then $\kappa\in A^{p_\al+p_\be-1,q_\al+q_\be-1}$, etc., and $\eta\in A^{p_\ka+p_\ga-1,q_\ka+q_\ga}=A^{p_\al+p_\be+p_\ga-2, q_\al+q_\be+q_\ga-1}$, etc.

If we denote the cyclic ABC defining system given by elements $\kappa, \lambda, \mu,\nu, \eta,\eta',\varrho,\varrho'\in A$ by $S$, then we define the cyclic ABC staircase product $\mC_S$ to be
\begin{align}
\label{EQU:CS^3} \mC_S&:=(\mC\uo_S,\mC\ut_S)\quad \in A^{p-2,q-1}\oplus A^{p-1,q-2}\,\,\, ,\quad\quad\quad \text{where}\\
\label{EQU:CS^1} \mC\uo_S&:=\eta\cdot \de +(-1)^{\epsilon+|\be|+1}\cdot\be \cdot\varrho +(\delbar\kappa)\cdot \mu +(-1)^{\epsilon+1}\cdot(\delbar\lambda)\cdot \nu\\
\label{EQU:CS^2} \mC\ut_S&:= \eta'\cdot \de +(-1)^{\epsilon+|\be|+1}\cdot\be \cdot\varrho'  +(-1)^{|\kappa|}\cdot \kappa\cdot (\del\mu) +(-1)^{\epsilon+|\lambda|+1}\cdot\lambda\cdot (\del\nu)
\end{align}
with $p=p_\al+p_\be+p_\ga+p_\de$ and $q=q_\al+q_\be+q_\ga+q_\de$, and $(-1)^{\epsilon}=(-1)^{|{\al}|\cdot (|{\be}|+|{\ga}|+|{\de}|)}$.
\end{definition}

\begin{theorem}\label{THM:cyclic-ABC-Massey}
There is a well-defined map
\[
\laa.\raa:H^{1}(\mcS_{p_1,q_1})\times H^{1}(\mcS_{p_2,q_2})\times H^{1}(\mcS_{p_3,q_3})\times H^{1}(\mcS_{p_4,q_4})\to \mc P\Big(H^{-1}(\mcS_{p,q})\Big)
\]
where $p=p_1+p_2+p_3+p_4$ and $q=q_1+q_2+q_3+q_4$, taking four Bott-Chern cohomology classes in $H^{1}(\mcS_{p_j,q_j})=\HBC^{p_j,q_j}(A)$ for $j=1,2, 3, 4$, and mapping them to a subset of Schweitzer cohomology in degree $-1$, given, for any inputs $[\al],[\be],[\ga],[\de]$, by
\begin{multline}
\laa[\al],[\be],[\ga],[\de]\raa:=\{[\mC_S]\in H^{-1}(\mcS_{p,q}): 
\\ S \text{ is a cyclic ABC defining system for } \al, \be, \ga,\de\}
\quad \subseteq H^{-1}(\mcS_{p,q})
\end{multline}
We call $\laa[\al],[\be],[\ga],[\de]\raa$ the cyclic ABC Massey product of $[\al],[\be],[\ga],[\de]$.

These products satisfy the cyclicity condition
\begin{equation}\label{EQU:cyclicity}
\laa[\al],[\be],[\ga],[\de]\raa=-(-1)^{(|\al|+|\be|)\cdot(|\ga|+|\de|)}\cdot\laa[\ga],[\de],[\al],[\be]\raa
\end{equation}
\end{theorem}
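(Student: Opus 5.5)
The proof has three parts: $\mC_S$ is a cycle in $\mcS^{-1}_{p,q}$; the set of classes $[\mC_S]$ depends only on the Bott-Chern classes of the inputs; and the cyclicity relation \eqref{EQU:cyclicity} holds.

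\emph{Cycle condition.} We must show $\del\mC\uo_S+\delbar\mC\ut_S=0$. Expanding, and using that $\al,\be,\ga,\de$ are $\del$- and $\delbar$-closed, the $\eta,\eta'$ terms combine to $(\del\eta+\delbar\eta')\cdot\de=(\ka\ga-\al\lambda)\de$. Similarly the $\varrho,\varrho'$ terms give $\pm\be\cdot(\mu\al-\ga\nu)$. The remaining four terms involve $\del\delbar\ka\cdot\mu$, $\delbar\ka\cdot\del\mu$, $\ka\cdot\delbar\del\mu$, and the analogous $\lambda,\nu$ terms. The signs are arranged so that the mixed terms $\delbar\ka\cdot\del\mu$ cancel between $\del\mC\uo$ and $\delbar\mC\ut$. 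What is left is
\[
\al\be\mu\pm\ka\ga\de\pm\be\ga\nu\pm\lambda\de\al .
\]
Using graded commutativity and the sign $\epsilon$ (which moves $\al$ past $\be\ga\de$), these cancel against the four terms from the $\eta,\varrho$ pieces. This is a bookkeeping check of signs, carried out term by term.

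\emph{Independence of representatives.} If $\al$ is replaced by $\al+\del\delbar a$, and likewise for the other inputs, I write down an explicit modified defining system. For example, $\ka$ becomes $\ka+a\cdot\be$ up to sign, $\nu$ becomes $\nu\pm\de\cdot a$, and $\eta,\varrho,\varrho'$ are corrected by products such as $\delbar a\cdot\lambda$ and $a\cdot\delbar\lambda$. I then check that the new $\mC_S$ differs from the old one by a Schweitzer coboundary, that is, by an element of
\[
\mathrm{Im}\,\del|_{A^{p-3,q-1}}+\mathrm{Im}(\del+\delbar)|_{A^{p-2,q-2}}+\mathrm{Im}\,\delbar|_{A^{p-1,q-3}} .
\]
By symmetry it suffices to do this for one input, say $\al$, which appears twice, and one input, say $\be$, which appears once in the staircase. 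The others follow by the cyclicity proved below, or by the same computation.

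Conversely, defining systems for the new representatives correspond bijectively to those for the old ones via this modification, so the two sets coincide. This is the analogue of the argument in the proof of Proposition~\ref{PROP:qi=>cyclicMIP_equality}. There, changing $\ka$ by a closed piece forced a compensating change of $\eta$ together with an exact correction like $d(\ka^\circ\cdot\mu)$. Here the correction terms will have the form $(\del+\delbar)(x\cdot\mu)$ or $\del(\cdot)$, $\delbar(\cdot)$.

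\emph{Cyclicity.} Given a defining system $S$ for $\al,\be,\ga,\de$, I form $S'$ for $\ga,\de,\al,\be$ by the relabelling $\ka'=\pm\mu$, $\lambda'=\pm\nu$, $\mu'=\pm\ka$, $\nu'=\pm\lambda$, $\eta'=\pm\varrho$ and $\varrho'=\pm\eta$, with primes included. The signs are chosen so that the equations of \eqref{EQU:cyclic-ABC-defining-system} hold, using the graded commutativity of the products $\al\be$ etc. I then show that $\mC_{S'}$ equals $-(-1)^{(|\al|+|\be|)(|\ga|+|\de|)}\mC_S$ modulo coboundaries. The swap exchanges $\eta\de$ with $\varrho\be$-type terms, which match after commuting. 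It also exchanges $(\delbar\ka)\mu$ with a term of the form $(\delbar\mu)\ka$. These do not match termwise; their difference is
\[
(\delbar\ka)\mu\mp\ka\,\delbar\mu\ \ \text{in the first component},\qquad \ka\,\del\mu\mp(\del\ka)\mu\ \ \text{in the second},
\]
and I expect this to equal $(\del+\delbar)(\pm\ka\mu)$ after accounting for the $(1)/(2)$ components. Since the relabelling is an involution up to sign, this gives equality of the two sets.

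The main obstacle is getting the signs right in the cyclicity step and in the representative-independence step. In particular one must verify that the asymmetric placement of $\delbar$ in $\mC\uo$ and $\del$ in $\mC\ut$ is exactly compensated by a $(\del+\delbar)$-exact term. I would first do the computation with all degrees even, to confirm the structure, and then track signs.
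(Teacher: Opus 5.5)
Your plan is correct and is essentially the paper's proof: the same direct cycle computation, the same rotated defining system (all the relabelling signs turn out to be $+$) with $\mC_S+(-1)^{(|\al|+|\be|)(|\ga|+|\de|)}\mC_{S^\circlearrowleft}=(\del+\delbar)(\ka\mu+(-1)^{\epsilon+1}\lambda\nu)$, and explicit modified defining systems for one input of each type, with cyclicity covering the other two. The only difference is that the paper treats $\be$ and $\ga$ directly rather than $\al$ and $\be$, which is immaterial since the rotation swaps $\al$ and $\ga$; for $\al\mapsto\al+\del\delbar a$ your kind of modification does go through, with the change in $\mC_S$ being the Schweitzer boundary of $(0,a\be\mu,0)$, mirroring the paper's $(0,\ka\zeta\de,0)$ for $\ga$.
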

\begin{proof}
Let $S$ be a cyclic ABC defining system for $\al, \be, \ga,\de$ given by $\kappa, \lambda, \mu,\nu, \eta,\eta',\varrho,\varrho'\in A$ which satisfy \eqref{EQU:cyclic-ABC-defining-system}. Then, $[\mC_S]$ is an element in $H^{-1}(\mcS_{p,q})$, since, using \eqref{EQU:cyclic-ABC-defining-system}, we have:
\[
\del\mC\uo_S+\delbar \mC\ut_S
=(\kappa \ga-\al \lambda)\delta-(-1)^{\epsilon}\beta( \mu \al-\ga \nu)
+(\del\delbar\kappa)\mu-\kappa(\del\delbar\mu)
-(-1)^\epsilon(\del\delbar\lambda)\nu+(-1)^\epsilon\lambda(\del\delbar\nu)
=0
\]

The cyclicity condition follows since cyclic rotation of the cyclic ABC defining system $S$ in \eqref{EQU:cyclic-ABC-defining-system} gives a cyclic ABC defining system $S^\circlearrowleft$, for $\ga,\de,\al,\be$ for which we can compute $\mC_S+(-1)^{(|\al|+|\be|)\cdot(|\ga|+|\de|)}\cdot \mC_{S^\circlearrowleft}=(\del+\delbar)(\kappa\cdot \mu+(-1)^{\epsilon+1}\lambda\cdot \nu)$, so that the classes $[\mC_S]=[-(-1)^{(|\al|+|\be|)\cdot(|\ga|+|\de|)}\cdot \mC_{S^\circlearrowleft}]$ in $H^{-1}(\mcS_{p,q})$.

It remains to show that $\laa.\raa$ is well-defined on the input classes in $H^1(\mcS_{p_j,q_j})$. If we add the exact term $\del\delbar \xi$ to $\beta$, then we consider the new cyclic ABC defining system $S^\xi$ given by
\[
\begin{tabular}{ccccc}  \cline{3-5}
& \multicolumn{1}{c|}{} & \multicolumn{1}{c|}{$\varrho,\varrho'$}  & \multicolumn{1}{c|}{$\,\,\,\nu\,\,\,$}  & $\,\,\,\al\,\,\,$ \\   \cline{3-4}
& \multicolumn{1}{c|}{} & \multicolumn{1}{c|}{$\mu$}  & $ \de$ \\ \cline{1-3}
\multicolumn{1}{|c|}{$\eta, \eta'$} & \multicolumn{1}{c|}{$\lambda+\xi\gamma$} & $\ga$ &  \\ \cline{1-2}
\multicolumn{1}{|c|}{$\kappa+\al\xi$} & \rule{0pt}{12pt}$\be+\del\delbar\xi$ & &   \\ \cline{1-1} 
\multicolumn{1}{|c}{$\al$} &  &  &
\end{tabular}
\]
Computing $\mC\uo_{S^\xi}-\mC\uo_S=(-1)^{\epsilon+|\be|+1}(\del\delbar\xi)\varrho+\delbar(\al\xi) \mu +(-1)^{\epsilon+1}\delbar(\xi\ga) \nu$ and $\mC\ut_{S^\xi}-\mC\ut_S=(-1)^{\epsilon+|\be|+1}(\del\delbar\xi) \varrho'  +(-1)^{|\kappa|} \al\xi (\del\mu) +(-1)^{\epsilon+|\lambda|+1}\xi\ga (\del\nu)$, shows that the difference $\mC_{S^\xi}-\mC_S$ equals the Schweitzer differential $ A^{p-3,q-1}\oplus A^{p-2,q-2}\oplus A^{p-1,q-3}\ni (a,b,c)\mapsto (\del a+\delbar b, \del b +\delbar c)$ applied to $(-1)^{\epsilon+|\be|+1}\cdot ((\delbar\xi)\varrho,(-1)^{|\be|+1}\xi(\delbar\varrho'),-(\del\xi)\varrho')$, showing that $[\mC_{S^\xi}]=[\mC_S]$ in $H^{-1}(\mcS_{p,q})$.

Similarly, when adding $\del\delbar\zeta$ to $\ga$, we consider the defining system $S^\zeta$ given by
\[
\begin{tabular}{ccccc}  \cline{3-5}
& \multicolumn{1}{c|}{} & \multicolumn{1}{c|}{\rule{0pt}{12pt} $\varrho-(\delbar\zeta)\nu,\varrho'-(-1)^{|\zeta|}\zeta (\del\nu)$}  & \multicolumn{1}{c|}{$\,\,\,\nu\,\,\,$}  & $\,\,\,\al\,\,\,$ \\   \cline{3-4}
& \multicolumn{1}{c|}{} & \multicolumn{1}{c|}{$\mu+\zeta\de$}  & $ \de$ \\ \cline{1-3}
\multicolumn{1}{|c|}{$\eta+(-1)^{|\kappa|}\kappa(\delbar \zeta), \eta'+(\del\kappa)\zeta$} & \multicolumn{1}{c|}{$\lambda+\be\zeta$} &\rule{0pt}{12pt} $\ga+\del\delbar\zeta$ &  \\ \cline{1-2}
\multicolumn{1}{|c|}{$\kappa$} & $\be$ & &   \\ \cline{1-1} 
\multicolumn{1}{|c}{$\al$} &  &  &
\end{tabular}
\]
Thus, we get $\mC\uo_{S^\zeta}-\mC\uo_S=(-1)^{|\kappa|}\kappa(\delbar\zeta)\de+(-1)^{\epsilon+|\be|}\be(\delbar\zeta)\nu+(\delbar \kappa)\zeta\de-(-1)^\epsilon\delbar(\be\zeta)\nu=\delbar(\kappa\zeta\de)$ and $\mC\ut_{S^\zeta}-\mC\ut_S=(\del\kappa)\zeta\de+(-1)^{\epsilon+|\be|+|\zeta|}\be\zeta(\del\nu)+(-1)^{|\kappa|}\kappa\del(\zeta\de)-(-1)^{\epsilon+|\lambda|}\be\zeta(\del\nu)=\del(\kappa\zeta\de)$, which shows that the difference $\mC_{S^\zeta}-\mC_S$ equals the Schweitzer differential applied to $(0,\kappa\zeta\de,0)$, which in turn shows that $[\mC_{S^\zeta}]=[\mC_S]$ in $H^{-1}(\mcS_{p,q})$.

Independence of adding exact terms to $\al$ or $\de$ now also follows via the cyclicity condition.
\end{proof}

\begin{proposition}
If $f:A\to B$ is a map of CBBAs, then for any Bott-Chern classes $[\al],[\be],[\ga],[\de]$, there is an inclusion of subsets of $H^{-1}(\mcS_{p,q}(B))$,
\begin{equation}\label{EQU:CBBA-naturality}
f_*\big(\laa [\al],[\be],[\ga],[\de] \raa\big) \subseteq \laa [f(\al)],[f(\be)],[f(\ga)],[f(\de)]\raa
\end{equation}

If, moreover, $f:A\to B$ induces isomorphisms on Schweitzer cohomologies $H^k(\mcS^\bu_{p,q}(A))\to H^k(\mcS^\bu_{p,q}(B))$ for $k=1,0,-1$ and for all $p$ and $q$, then the inclusion \eqref{EQU:CBBA-naturality} is an equality.
\end{proposition}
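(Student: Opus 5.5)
The inclusion \eqref{EQU:CBBA-naturality} is immediate. If $S$ is a cyclic ABC defining system for $\al,\be,\ga,\de$, then applying $f$ to every element of $S$ gives a cyclic ABC defining system $f(S)$ for $f(\al),f(\be),f(\ga),f(\de)$. This is because $f$ is multiplicative and commutes with $\del$ and $\delbar$, so every equation in \eqref{EQU:cyclic-ABC-defining-system} is preserved. Moreover $f(\mC_S)=\mC_{f(S)}$, since bidegrees and signs are preserved, and hence $f_*[\mC_S]=[\mC_{f(S)}]$.

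For the reverse inclusion I will mimic the proof of Proposition \ref{PROP:qi=>cyclicMIP_equality}, using the Schweitzer isomorphisms in degrees $1,0,-1$ in place of the quasi-isomorphism hypothesis. Let $\wt S=(\wt\ka,\wt\lambda,\wt\mu,\wt\nu,\wt\eta,\wt\eta',\wt\varrho,\wt\varrho')$ be a defining system in $B$.

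\emph{Lower level.} The element $\al\be$ is $\del$- and $\delbar$-closed, and $f(\al\be)=\del\delbar\wt\ka$ is zero in $H^1(\mcS(B))$. Injectivity on $H^1$ therefore gives $\ka\in A$ with $\del\delbar\ka=\al\be$. Then $\wt\ka-f(\ka)$ is $\del\delbar$-closed and defines a class in $H^0(\mcS(B))$. Surjectivity on $H^0$ gives a $\del\delbar$-closed $\ka^c\in A$ and elements $a,b\in B$ with $\wt\ka-f(\ka+\ka^c)=\del a+\delbar b$. I replace $\ka$ by $\ka_1=\ka+\ka^c$ and $\wt\ka$ by $\wt\ka_1=\wt\ka-\del a-\delbar b$. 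To keep $\wt S$ a defining system, I correct $\wt\eta,\wt\eta'$ by terms such as $\pm a\wt\ga$ and $\pm b\wt\ga$; then $\del(\cdot)+\delbar(\cdot)$ absorbs the change $-(\del a+\delbar b)\wt\ga$. Next I check that $\mC_{\wt S}$ changes only by a Schweitzer coboundary $(\del x+\delbar y,\del y+\delbar z)$. The changed terms are $(\delbar\wt\ka)\wt\mu$, $\wt\ka(\del\wt\mu)$ and $\pm(a\wt\ga)\wt\de$. Using $\del\delbar\wt\mu=\wt\ga\wt\de$, I expect the difference to be the image of something like $(a\wt\mu,\ \pm\delbar(\cdot)\text{-type term},\ b\wt\mu)$, up to signs. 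This is analogous to the computation $d(\wt\ka^\circ\wt\mu)$ in the CDGA case. I repeat the same step for $\lambda,\mu,\nu$.

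\emph{Upper level.} Now $f$ maps $\ka_2,\lambda_2,\mu_2,\nu_2$ onto the corresponding entries of $\wt S_2$. The element $\ka_2\ga-\al\lambda_2$ is $\del\delbar$-closed, since $\del\delbar(\ka_2\ga-\al\lambda_2)=\al\be\ga-\al\be\ga=0$. Its image is $\del\wt\eta+\delbar\wt\eta'$, so it vanishes in $H^0(\mcS(B))$, and injectivity on $H^0$ gives $(\eta,\eta')$ in $A$ solving the equation. The difference $(\wt\eta-f(\eta),\wt\eta'-f(\eta'))$ satisfies $\del(\cdot)+\delbar(\cdot)=0$, so it is a cocycle in $\mcS^{-1}(B)$. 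Surjectivity on $H^{-1}$ lets me adjust $(\eta,\eta')$ by a cocycle in $A$ so that the difference becomes $(\del x+\delbar y,\del y+\delbar z)$. I then replace $(\wt\eta,\wt\eta')$ accordingly. This changes $\mC_{\wt S}$ by $((\del x+\delbar y)\wt\de,(\del y+\delbar z)\wt\de)$, which equals the Schweitzer differential of $(x\wt\de,y\wt\de,z\wt\de)$ because $\wt\de$ is $\del$- and $\delbar$-closed. I do the same for $(\varrho,\varrho')$. The resulting $S$ in $A$ satisfies $f(S)=\wt S_4$ and $[\mC_{\wt S_4}]=[\mC_{\wt S}]$, which gives the inclusion $\supseteq$.

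The main obstacle is the lower-level correction. The ambiguity of $\wt\ka$ lives in $\operatorname{Im}\del+\operatorname{Im}\delbar$ rather than in a single image, so the compensating changes to $(\wt\eta,\wt\eta')$ and the coboundary witnessing the change in both components of $\mC_{\wt S}$ require careful sign bookkeeping. The $\nu$ and $\lambda$ corrections also involve the sign $(-1)^\epsilon$. If that bookkeeping becomes unwieldy, I would first handle $\ka$ and $\mu$ explicitly and then use the cyclic rotation $S^\circlearrowleft$ from the proof of Theorem \ref{THM:cyclic-ABC-Massey} to transfer the argument to $\lambda$ and $\nu$.
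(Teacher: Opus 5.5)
Your strategy is the paper's. You push defining systems forward for ``$\subseteq$''. For ``$\supseteq$'' you lift $\ka,\lambda,\mu,\nu$ using injectivity on $H^1$ and surjectivity on $H^0$, compensating in $(\wt\eta,\wt\eta')$ and $(\wt\varrho,\wt\varrho')$, and then lift $(\eta,\eta')$ and $(\varrho,\varrho')$ using injectivity on $H^0$ and surjectivity on $H^{-1}$. Your upper-level computation, where the change is the boundary of $(x\wt\de,y\wt\de,z\wt\de)$, is exactly the paper's.

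\textbf{The lower-level coboundary.} This is the only real computation, and you leave it as an expectation with a candidate that cannot work. Since $a\in B^{p_\al+p_\be-2,\,q_\al+q_\be-1}$, the element $a\wt\mu$ lies in $B^{p-3,q-2}$. That is one total degree too low to be a component of $\mcS^{-2}_{p,q}=B^{p-3,q-1}\oplus B^{p-2,q-2}\oplus B^{p-1,q-3}$. Use the paper's normalization $f(\ka_1)-\wt\ka=\del a+\delbar b$, $\wt\eta_1=\wt\eta+a\wt\ga$, $\wt\eta'_1=\wt\eta'+b\wt\ga$; yours differs by an overall sign. Then
\[
\mC_{\wt S_1}-\mC_{\wt S}=\bigl(a\wt\ga\wt\de+(\delbar\del a)\wt\mu,\ b\wt\ga\wt\de+(-1)^{|\wt\ka|}(\del a+\delbar b)(\del\wt\mu)\bigr),
\]
and this is the Schweitzer differential of $\bigl((-1)^{|a|}a\,(\delbar\wt\mu),\ (\del a)\wt\mu,\ (-1)^{|\wt\ka|}b\,(\del\wt\mu)\bigr)$. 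The terms $a\wt\ga\wt\de$ and $b\wt\ga\wt\de$ come from $\del\delbar\wt\mu=\wt\ga\wt\de$ and $\delbar\del\wt\mu=-\wt\ga\wt\de$, using $|a|=|b|=|\wt\ka|-1$. The cross terms $\pm(\del a)(\delbar\wt\mu)$ cancel between $\del x$ and $\delbar y$. So the primitive must pair $a$ with $\delbar\wt\mu$ and $b$ with $\del\wt\mu$, not with $\wt\mu$ itself.

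\textbf{The rotation fallback.} $S^\circlearrowleft$ is the defining system for $\ga,\de,\al,\be$ with entries $\mu,\nu,\ka,\lambda$, and with $(\eta,\eta')$ and $(\varrho,\varrho')$ exchanged. So rotation swaps $\ka\leftrightarrow\mu$ and $\lambda\leftrightarrow\nu$; treating $\ka$ and $\mu$ by hand and then rotating reaches nothing new. Instead, treat $\ka$ and $\lambda$ explicitly. For $\lambda$, compensate by $\wt\eta\mapsto\wt\eta-(-1)^{|\al|}\wt\al a$ and $\wt\eta'\mapsto\wt\eta'-(-1)^{|\al|}\wt\al b$. Using $\wt\al a\wt\de=(-1)^{\epsilon+|\al|}a\wt\de\wt\al$ and $\del\delbar\wt\nu=\wt\de\wt\al$, the change in $\mC$ is $(-1)^{\epsilon+1}$ times the formula above, with $(\wt\mu,\wt\ka,\wt\ga\wt\de)$ replaced by $(\wt\nu,\wt\lambda,\wt\de\wt\al)$. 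Rotation is an involution with $[\mC_{S}]=-(-1)^{(|\al|+|\be|)(|\ga|+|\de|)}[\mC_{S^\circlearrowleft}]$, so it then transfers these two cases to $\mu$ and $\nu$; the paper simply says ``repeating the above''. With these two repairs your argument coincides with the paper's proof.
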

\begin{proof}
For a cyclic ABC defining system $S$ for $\al, \be, \ga, \de$ as in \eqref{EQU:cyclic-ABC-defining-system}, applying $f$ to each element in \eqref{EQU:cyclic-ABC-defining-system} gives a cyclic ABC defining system $S^f$ for $f(\al), f(\be), f(\ga), f(\de)$ with $f(\mC_{S})=\mC_{S^f}$, which shows the inclusion \eqref{EQU:CBBA-naturality}.

The proof for the second assertion is analogous to the one for proposition \ref{PROP:qi=>cyclicMIP_equality}. Let $\wt\kappa$, $\wt\lambda$, $\wt\mu$, $\wt\nu$, $\wt\eta$, $\wt\eta'$, $\wt\varrho$, $\wt\varrho' \in B$ be a cyclic ABC defining system for $\wt\al=f(\al), \wt\be=f(\be), \wt\ga=f(\ga),\wt\de=f(\de)$ denoted by $\wt S$. Since $f(\al\cdot \be)=f(\al)\cdot f(\be)=\del\delbar(\wt\kappa)$ and $f$ is an isomorphism on Bott-Chern cohomology $H^1(\mcS)$, $\al\cdot \be$ is $\del\delbar$-exact, and so there exists a $\kappa\in A$ with $\del\delbar(\kappa)=\al\cdot \be$. Since $\del\delbar(\wt\kappa-f(\kappa))=0$ and $f$ is an isomorphism on Aeppli cohomology $H^0(\mcS)$, there exists a $\del\delbar$-closed $\ka^c\in A$ such that the Aeppli classes $[f(\ka^c)]= [\wt\ka-f(\ka)]$ are equal, i.e., their representatives differ by a term in the denominator of Aeppli cohomology, $f(\ka+\ka^c)-\wt\ka=\del(\wt\ka^\circ)+\delbar(\wt\ka'^\circ)$ for some $\wt\ka^\circ,\wt\ka'^\circ\in B$. Setting $\ka_1=\ka+\ka^c$ and ${\wt\ka}_1=\wt\ka+ \del(\wt\ka^\circ)+\delbar(\wt\ka'^\circ)$ gives $\del\delbar(\ka_1)=\al\cdot \be$, $\del\delbar({\wt\ka}_1)=f(\al)\cdot f(\be)$, and $f(\ka_1)=\wt\ka_1$. Moreover, setting $\wt\eta_1=\wt\eta+\wt \ka^\circ\cdot \wt\ga$ and $\wt\eta'_1=\wt\eta'+\wt \ka'^\circ\cdot \wt\ga$ makes $\wt\kappa_1, \wt\lambda, \wt\mu,\wt\nu, \wt\eta_1, \wt\eta'_1,\wt\varrho, \wt\varrho'$ into a new cyclic ABC defining system $\wt S_1$ for $f(\al), f(\be), f(\ga),f(\de)$ with $[\mc C_{\wt S}]=[\mc C_{\wt S_1}]$ as classes in $H^{-1}(\mcS_{p,q})$, since $\mc C_{\wt S_1}-\mc C_{\wt S}=(\wt\ka^\circ\wt\ga\wt\de+\delbar(\del\wt\ka^\circ)\wt\mu,\wt\ka'^\circ\wt\ga\wt\de+(-1)^{|\wt\ka|}(\del\wt\ka^\circ+\delbar\wt\ka'^\circ)(\del\wt\mu))$ equals to the Schweitzer boundary applied to $((-1)^{|\wt\ka^\circ|}\wt\ka^\circ(\delbar\wt\mu),(\del\wt\ka^\circ)\wt\mu, (-1)^{|\wt\ka|}\wt\ka'^\circ(\del\wt\mu))$.

Repeating the above for $\wt\lambda$, $\wt\mu$ and $\wt\nu$, we end up with elements $\wt\kappa_2, \wt\lambda_2, \wt\mu_2,\wt\nu_2, \wt\eta_2,\wt\eta'_2,\wt\varrho_2, \wt\varrho'_2\in B$, which form a new cyclic ABC defining system $\wt S_2$ for $f(\al), f(\be), f(\ga),f(\de)$ with $[\mc C_{\wt S}]=[\mc C_{\wt S_2}]$, as well as elements $\kappa_2, \lambda_2, \mu_2,\nu_2 \in A$ which satisfy the left four equations in \eqref{EQU:cyclic-ABC-defining-system} and  $f(\kappa_2)=\wt\kappa_2,f(\lambda_2)= \wt\lambda_2, f(\mu_2)=\wt\mu_2,f(\nu_2)=\wt\nu_2$. Now, $f( \kappa_2\cdot \ga- \al \cdot\lambda_2)=\del\wt\eta_2+\delbar\wt\eta'_2$ shows that $\kappa_2 \cdot\ga- \al \cdot\lambda_2$ is zero in Aeppli cohomology, and, since $f$ is an isomorphism on Aeppli cohomology, there exists $\eta,\eta'\in A$ with $\del\eta+\delbar\eta'=\kappa_2\cdot \ga- \al\cdot \lambda_2$. Then, $\del(\wt\eta_2-f(\eta))+\delbar(\wt\eta'_2-f(\eta'))=0$, and, using the isomorphism $f$ on $H^{-1}(\mcS)$, there exist $\eta^c,\eta'^c \in A$ so that $(\eta^c,\eta'^c)$ is closed, in the sense that $\del\eta^c+\delbar\eta'^c=0$, with $[(f(\eta^c),f(\eta'^c))]=[(\wt\eta_2-f(\eta),\wt\eta'_2-f(\eta'))]$ in $H^{-1}(\mcS(B))$. On the cochain level this means that there are elements $\wt\eta^\circ,\wt\eta'^\circ,\wt\eta''^\circ\in B$ with $(\del\wt\eta^\circ+\delbar \wt\eta'^\circ, \del \wt\eta'^\circ+\delbar\wt\eta''^\circ) =(f(\eta+\eta^c)-\wt\eta_2,f(\eta'+\eta'^c)-\wt\eta'_2)$. Setting $\eta_3=\eta+\eta^c$ and $\eta'_3=\eta'+\eta'^c$ satisfies $\del\eta_3+\delbar\eta'_3=\del\eta+\delbar\eta'= \kappa_2 \cdot\ga- \al \cdot\lambda_2$, and setting $\wt\eta_3= \wt\eta_2+\del(\wt\eta^\circ)+\delbar(\wt\eta'^\circ)$ and $\wt\eta'_3= \wt\eta'_2+\del(\wt\eta'^\circ)+\delbar(\wt\eta''^\circ)$ gives $f(\eta_3)=\wt\eta_3$ and $f(\eta'_3)=\wt\eta'_3$. Moreover, we have a new cyclic ABC defining system $\wt S_3$ given by $\wt\kappa_2, \wt\lambda_2, \wt\mu_2,\wt\nu_2, \wt\eta_3,\wt\eta'_3, \wt\varrho_2, \wt\varrho'_2$ with $[\mc C_{\wt S}]=[\mc C_{\wt S_3}]$ since the difference $\mc C_{\wt S_3}-\mc C_{\wt S_2}=(\del(\wt\eta^\circ\cdot \de)+\delbar(\wt\eta'^\circ\cdot \de),\del(\wt\eta'^\circ\cdot \de) + \delbar (
\wt\eta''^\circ\cdot \de))$ is exact and thus vanishes in $H^{-1}(\mcS(B))$.

Repeating the above for $\wt\varrho_2$ and $\wt\varrho'_2$, we obtain cyclic ABC defining systems $S_4$ and $\wt S_4$ for $\al, \be,\ga,\de$ and $f(\al), f(\be),f(\ga),f(\de)$, respectively, where the terms of the cyclic ABC defining system map to each other under $f$, and, $[\mc C_{\wt S_4}]=[\mc C_{\wt S}]$. Thus, $f([\mc C_{S_4}])=[f(\mc C_{S_4})]=[\mc C_{\wt S_4}]=[ \mc C_{\wt S}]$ lies in the lefthand side of \eqref{EQU:CBBA-naturality}, showing the opposite inclusion ``$\supseteq$'' in \eqref{EQU:CBBA-naturality}.
\end{proof}

If $(A^{\bu,\bu},\cdot,\del, \delbar)$ is a CBBA, then the totalization $A^{r}=\bigoplus_{p+q=r} A^{p,q}$ with differential $d=\del+ \delbar$ gives a CDGA $(A^{\bu}, \cdot, d)$. For $\al\in A$, denote by $d^c(\al)=\frac 1 2 (\del-\delbar)(\al)$, so that we get $d\circ d^c=-\del\circ\delbar$. We have the following maps from the Schweitzer cohomology $H^k(\mcS_{p,q})$ in degrees $k=1$, $0$, and $-1$ to the cohomology $H^\bu(A)$ of $A$ with respect to $d$ in the specified degrees. 

\begin{definition}
Define $\iota:H^{1}(\mcS_{p,q})\to H^{p+q}(A)$ to be given by $\iota([\al])=[\al]$. Moreover, we also define the induced maps $d^c:H^{0}(\mcS_{p,q})\to H^{p+q-1}(A), d^c([\al])=[\frac 1 2 (\del-\delbar)(\al)]$, and $d^c:H^{-1}(\mcS_{p,q})\to H^{p+q-2}(A), d^c([(\al^{(1)},\al^{(2)})])=[\frac 1 2 (\del-\delbar)(\al^{(1)}+\al^{(2)})]$. 

A straightforward check shows that these three maps are well-defined linear maps.
\end{definition}

\begin{proposition}\label{PROP:ABC-regular}
There is an inclusion from the cyclic ABC Massey product $\laa.\raa$ to the cyclic Massey product $\la.\ra$. More precisely, we have:
\begin{equation}\label{EQU:DeRham-inclusion}
(-1)^{|\al|+|\ga|} \frac{d^c}{2}  \Big(\laa[\al],[\be],[\ga],[\de]\raa\Big)\subseteq
\la [\al],[\be],[\ga],[\de])\ra
\end{equation}
\end{proposition}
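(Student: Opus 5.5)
Starting from an arbitrary cyclic ABC defining system $S=(\kappa,\lambda,\mu,\nu,\eta,\eta',\varrho,\varrho')$ for $\al,\be,\ga,\de$, I will build an explicit cyclic defining system $T$ in the CDGA $(A^\bu,d=\del+\delbar)$ in the sense of \eqref{EQU:cyclic-defining-system}, and show that $\mc C_T$ is cohomologous to $(-1)^{|\al|+|\ga|}\frac12 d^c(\mC\uo_S+\mC\ut_S)$. Since $\al,\be,\ga,\de$ are $\del$- and $\delbar$-closed, they are $d$-closed, and their Bott-Chern classes map under $\iota$ to their de Rham classes, so this gives the inclusion \eqref{EQU:DeRham-inclusion}.

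First, the level-one entries. Since $d\circ d^c=-\del\delbar$, the equation $\del\delbar\kappa=\al\be$ gives $d(-d^c\kappa)=\al\be$. I will therefore set $T$'s entries to $\kappa_T=-(-1)^{|\al|}d^c\kappa=-\frac{(-1)^{|\al|}}2(\del-\delbar)\kappa$, and similarly $\lambda_T,\mu_T,\nu_T$ with signs $(-1)^{|\be|},(-1)^{|\ga|},(-1)^{|\de|}$. Degrees match, because $|d^c\kappa|=|\kappa|+1=|\al|+|\be|-1$. Second, the level-two entries. I need $d\eta_T=(-1)^{|\kappa_T|}\kappa_T\ga+(-1)^{|\al|}\al\lambda_T$. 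Expanding, the right side is a combination of $(\del\kappa)\ga,(\delbar\kappa)\ga,\al\del\lambda,\al\delbar\lambda$. I will take the ansatz
\[
\eta_T = a\,(\del-\delbar)(\eta-\eta') + b\,\kappa\lambda\text{-type correction}+c\,(\text{terms like }\kappa\,\delbar\lambda,\ (\del\kappa)\lambda),
\]
using that $d(\del-\delbar)(\eta-\eta')$ equals $\pm\del\delbar$ applied to a combination of $\eta,\eta'$. This links to $\del(\del\eta+\delbar\eta')$-type expressions only after adding the cross-terms $\del\delbar\eta$ and $\del\delbar\eta'$. A cleaner choice is $\eta_T=\pm\frac14\big((\del-\delbar)\eta\ \text{restricted}\big)$ plus products like $\frac14(\delbar\kappa)\ldots$. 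Concretely, I would first try $\eta_T$ proportional to $\delbar\eta-\del\eta'+(\text{product terms in }\kappa,\lambda,\ga,\al)$. The reason is that $d(\delbar\eta-\del\eta')=\del\delbar\eta-\del\delbar\eta'+\delbar\delbar\eta-\del\del\eta'$, and bidegree considerations reduce this to $-\del\delbar\eta' + \delbar\del\eta$-pieces. These pieces relate to $\del$ and $\delbar$ applied to $\kappa\ga-\al\lambda$, which produces the needed $(\del\kappa)\ga$, $(\delbar\kappa)\ga$ and similar terms. The factor $\tfrac14$ and the constant $\frac{1}{2}\cdot\frac12$ in the statement should come exactly from these two halvings. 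The same construction gives $\varrho_T$.

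Third, with $T$ fixed, I will compute $\mc C_T$ from the formula for $\mc C_S$ in the CDGA case and compare it with $\frac12 d^c(\mC\uo_S+\mC\ut_S)=\frac14(\del-\delbar)(\mC\uo_S+\mC\ut_S)$. The latter is expanded using Leibniz, closedness of $\al,\ldots,\de$, and the defining equations. The difference should be $d$ of an explicit element built from products such as $\eta\de$, $\eta'\de$, $\kappa\mu$ and $\lambda\nu$, mirroring the correction $(\del+\delbar)(\kappa\mu\pm\lambda\nu)$ in the cyclicity proof.

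The main obstacle is the second step: finding the correct level-two elements $\eta_T,\varrho_T$ with consistent signs, since only $\del\eta+\delbar\eta'$ is controlled, not $\del\eta$ and $\delbar\eta'$ separately. If the pure ansatz fails, I will add the product terms $(\delbar\kappa)\lambda$ and $\kappa(\del\lambda)$ with unknown coefficients and solve for them by matching bidegree components. The sign $(-1)^{|\al|+|\ga|}$ and $\epsilon$ versus $\epsilon_0$ bookkeeping in the third step is routine but lengthy.
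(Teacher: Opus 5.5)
Your overall route is the paper's: build from $S$ an explicit cyclic defining system for $d=\del+\delbar$ using $d^c$, then compare staircase products modulo $d$-exact terms. Your level-one entries $(-1)^{|\al|+1}d^c\kappa,\dots$ coincide with the paper's.

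The gap is the level-two step. You leave it unresolved, with a fallback to product corrections. The computation you sketch there also has a sign error that hides the answer: $d(\del\eta')=\delbar\del\eta'=-\del\delbar\eta'$, so $d(\delbar\eta-\del\eta')=\del\delbar(\eta+\eta')$.

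The obstacle you name also dissolves. Since $\del^2=\delbar^2=0$, both $\del\delbar\eta=-\delbar(\kappa\ga-\al\lambda)$ and $\del\delbar\eta'=\del(\kappa\ga-\al\lambda)$ are determined by the defining equation. Hence
\[
\del\delbar(\eta+\eta')=(\del-\delbar)(\del\eta+\delbar\eta')=(\del-\delbar)(\kappa\ga-\al\lambda).
\]
Since $\al,\ga$ are closed and $|\kappa|=|\al|+|\be|-2$, the right side is exactly $2(-1)^{|\be|}\big((-1)^{|\kappa_T|}\kappa_T\ga+(-1)^{|\al|}\al\lambda_T\big)$.

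So $\eta_T=(-1)^{|\be|+1}\tfrac12 d^c(\eta+\eta')$ works with no product terms, and this is the paper's choice. Your candidate $\tfrac{(-1)^{|\be|}}{2}(\delbar\eta-\del\eta')$ differs from it by $\tfrac{(-1)^{|\be|}}{4}d(\eta-\eta')$, so it also works and gives the same class. Likewise $\varrho_T=(-1)^{|\de|+1}\tfrac12 d^c(\varrho+\varrho')$.

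Your first ansatz, by contrast, fails. One has $d(\del-\delbar)(\eta-\eta')=-2\del\delbar(\eta-\eta')=2\,d(\kappa\ga-\al\lambda)$, which involves $(\del+\delbar)\kappa$ rather than the required $(\del-\delbar)\kappa$.

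With this choice, your third step goes through as you anticipated, but it still has to be carried out. The $\eta\de$ and $\be\varrho$ terms match on the nose, because $\be,\de$ are $\del$- and $\delbar$-closed and $\epsilon_0$ equals $\epsilon+|\al|+|\be|+|\ga|+|\de|$ mod $2$. The mixed terms are handled by
\[
\tfrac14(\del-\delbar)\big((\delbar\kappa)\mu+(-1)^{|\kappa|}\kappa\,\del\mu\big)=(-1)^{|\kappa|}\,d^c\kappa\cdot d^c\mu+\tfrac14\,d\,\delbar(\kappa\mu),
\]
together with its analogue for $\lambda,\nu$, which carries the overall sign $(-1)^{\epsilon+1}$. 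The resulting exact discrepancies are $d\big(\tfrac{(-1)^{|\al|+|\ga|}}{4}\delbar(\kappa\mu)\big)$ and $d\big(\tfrac{(-1)^{\epsilon+|\al|+|\ga|+1}}{4}\delbar(\lambda\nu)\big)$. Only $\kappa\mu$ and $\lambda\nu$ products appear, as you guessed.
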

\begin{proof}
Let $\kappa, \lambda, \mu,\nu, \eta,\eta',\varrho,\varrho'\in A$ be a cyclic ABC defining system for $\al, \be, \ga,\de$ as in \eqref{EQU:cyclic-ABC-defining-system}. Then the following is a cyclic defining system for $\al, \be, \ga,\de$:
\[
\renewcommand{\arraystretch}{1.4} \begin{tabular}{ccccc}  \cline{3-5}
& \multicolumn{1}{c|}{} & \multicolumn{1}{c|}{$(-1)^{|\de|+1} \cdot \frac{d^c}{2}(\varrho+\varrho')$}  & \multicolumn{1}{c|}{$(-1)^{|\de|+1}\cdot d^c(\nu)$}  & $\,\,\,\al\,\,\,$ \\  \cline{3-4}
& \multicolumn{1}{c|}{} & \multicolumn{1}{c|}{$(-1)^{|\ga|+1}\cdot d^c(\mu)$}  & $ \de$ \\ \cline{1-3}
\multicolumn{1}{|c|}{$(-1)^{|\be|+1}\cdot \frac{d^c}{2}(\eta+\eta')$} & \multicolumn{1}{c|}{$(-1)^{|\be|+1}\cdot d^c(\lambda)$} & $\ga$ &  \\ \cline{1-2}
\multicolumn{1}{|c|}{$(-1)^{|\al|+1}\cdot d^c(\kappa)$} & $\be$ & &   \\ \cline{1-1} 
\multicolumn{1}{|c}{$\al$} &  &  &
\end{tabular} \renewcommand{\arraystretch}{1}
\]
since, for example, $d((-1)^{|\al|+1} d^c(\kappa))=(-1)^{|\al|}\del\delbar(\kappa)=(-1)^{|\al|}\al\be$, and, with $|\ka|=|\al|+|\be|-2$,
\begin{multline*}
d\Big((-1)^{|\be|+1}\cdot {\frac{d^c}{2}}(\eta+\eta')\Big)
=\frac{(-1)^{|\be|}} 2 \del\delbar(\eta+\eta')
=\frac{(-1)^{|\be|}} 2\Big(-\delbar(\kappa\gamma-\al\lambda-\delbar\eta')+\del(\kappa\gamma-\al\lambda-\del\eta)\Big)
\\
=(-1)^{|\kappa|+1}\cdot \Big((-1)^{|\al|+1}\frac{(\del-\delbar)}{2}(\kappa)\Big)\cdot \gamma+(-1)^{|\al|}\cdot \al\cdot \Big((-1)^{|\be|+1}\frac{(\del-\delbar)}{2}(\lambda))\Big).
\end{multline*}

Evaluating the terms appearing on the lefthand side of \eqref{EQU:DeRham-inclusion} using \eqref{EQU:CS^3} gives:
\begin{align*}
(-1)^{|\al|+|\ga|}  {\frac{d^c}{2}}  \Big(\eta\de+\eta'\de\Big)
=&(-1)^{|\eta|+1+1} \cdot  \Big((-1)^{|\be|+1} {\frac{d^c}{2}} (\eta+\eta')\Big)\cdot \de,
\\
(-1)^{|\al|+|\ga|} {\frac{d^c}{2}} \Big((-1)^{\epsilon+|\be|+1} \Big(\be\varrho+\be\varrho'\Big)\Big)
=&(-1)^{|\be|+\epsilon_0}\cdot \be\cdot \Big((-1)^{|\de|+1}  {\frac{d^c}{2}} (\varrho+\varrho')\Big),
\end{align*}
since $(-1)^{\epsilon_0}=(-1)^{\epsilon+|\al|+|\be|+|\ga|+|\de|}$, as well as:
\begin{multline*}
(-1)^{|\al|+|\ga|} {\frac{d^c}{2}} \Big((\delbar\kappa)\cdot \mu+(-1)^{|\kappa|}\cdot \kappa\cdot (\del\mu)\Big)\\
=(-1)^{|\kappa|}\cdot \Big((-1)^{|\al|+1}\cdot d^c(\kappa)\Big)\cdot \Big((-1)^{|\ga|+1}\cdot d^c(\mu)\Big)
+d\Big(\frac{(-1)^{|\al|+|\ga|}}{4}\cdot\delbar(\kappa\cdot \mu)\Big),
\end{multline*}
and
\begin{multline*}
(-1)^{|\al|+|\ga|} {\frac{d^c}{2}} \Big((-1)^{\epsilon+1}(\delbar\lambda)\cdot \nu+(-1)^{\epsilon+|\lambda|+1}\cdot \lambda\cdot (\del \nu)\Big)\\
=(-1)^{|\lambda|+1+\epsilon_0}\cdot \Big((-1)^{|\be|+1}\cdot d^c(\lambda)\Big)\cdot \Big((-1)^{|\de|+1}\cdot d^c(\nu)\Big)+d\Big(\frac{(-1)^{\epsilon+|\al|+|\ga|+1}}{4}\cdot \delbar(\lambda\cdot \nu)\Big).
\end{multline*}
These are the terms on the righthand side of \eqref{EQU:DeRham-inclusion} for the above cyclic defining system up to the $d$-exact terms in the last two equations. We thus get the inclusion \eqref{EQU:DeRham-inclusion} in $H^\bu(A)$.
\end{proof}

The following example shows that the cyclic ABC Massey products are, in general, non-trivial. The underlying algebra for this example comes from the holomorphic analogue of the filiform nilmanifold, which also appeared in \cite[Example 3.14]{SM} for a non-trivial quadruple ABC Massey product. As was pointed out in \cite{SM}, this is a model for a complex nilmanifold which is a holomorphic torus bundle over the Iwasawa bundle.
\begin{example}\label{EXA:holomorphic-filiform}
Let $A=(\Lambda (x,\xb,y,\yb,z,\zb,w,\wb, dz=xy, dw=xz)$, i.e., the only non-zero differential in degree $1$ are $\del z=xy, \delbar \zb=\xb\yb, \del w=xz, \delbar \wb=\xb\zb$. We compute the cyclic ABC Massey product for $\laa x, x\yb, y\yb, x\xb \raa$. Observe that the following is a cyclic ABC defining system:
\[
\begin{tabular}{ccccc}  \cline{3-5}
& \multicolumn{1}{c|}{} & \multicolumn{1}{c|}{$w\zb\, ,\, 0$}  & \multicolumn{1}{c|}{$\,\,\,0\,\,\,$}  & $\,\,\, x\,\,\,$ \\   \cline{3-4}
& \multicolumn{1}{c|}{} & \multicolumn{1}{c|}{$z\zb$}  & $x\xb$ \\ \cline{1-3}
\multicolumn{1}{|c|}{$\, 0\, ,\, 0\,$} & \multicolumn{1}{c|}{$\,\,\, 0\,\,\,$} & $y\yb$ &  \\ \cline{1-2}
\multicolumn{1}{|c|}{$0$} & $x\yb$ & &   \\ \cline{1-1} 
\multicolumn{1}{|c}{$x$} &  &  &
\end{tabular}
\]
Thus, $[(-x\yb w\zb,0) ]\in \laa x, x\yb, y\yb, x\xb \raa$, which is non-zero in $H^{-1}(\mcS_{4,3}(A))$, since $ -x\yb w\zb$ is not an element of $\del(A^{1,2})+\delbar(A^{2,1})$.

We next also show that the zero class in $H^{-1}(\mcS_{4,3}(A))$ is not an element of $\laa x, x\yb, y\yb, x\xb \raa$. Denote by $\kappa, \lambda, \mu,\nu, \eta,\eta',\varrho,\varrho'$ a general cyclic ABC defining system for $\al=x, \be=x\yb,\ga=y\yb, \de=x\xb$ as in \eqref{EQU:cyclic-ABC-defining-system}. Then, solving $\del\delbar\kappa=\al\be$ and $\del\delbar\lambda=\be\ga$, gives 
\begin{align*}
\kappa=&c_{x}\cdot x+c_{y}\cdot y+c_{z}\cdot z+c_{w}\cdot w \\
\lambda=&c_{x\xb}\cdot x\xb+c_{x\yb}\cdot x\yb+c_{x\zb}\cdot x\zb
+c_{x\wb}\cdot x\wb+c_{y\xb}\cdot y\xb+c_{y\yb}\cdot y\yb \\ 
&+c_{y\zb}\cdot y\zb+c_{y\wb}\cdot y\wb+c_{z\xb}\cdot z\xb
+c_{z\yb}\cdot z\yb+c_{w\xb}\cdot w\xb+c_{w\yb}\cdot w\yb 
\end{align*}
for some constants $c_x,\dots, c_{w\yb}$. To solve $\del\eta+\delbar\eta'= \kappa\cdot \ga-\al\cdot \lambda\in A^{2,1}$, note that $\eta'\in A^{2,0}$ and $\delbar(A^{2,0})=\{0\}$, so that $\eta'$ can be any element in $A^{2,0}$, i.e., for some constants $c_{xy}, \dots, c_{zw}$, 
\[
\eta'=c_{xy}\cdot xy+c_{xz}\cdot xz+c_{xw}\cdot xw+c_{yz}\cdot yz+ c_{yw}\cdot yw+c_{zw}\cdot zw.
\]
Since $\eta\in A^{1,1}$, we see that we can only solve for $\eta$ when $c_z=c_w=c_{w\xb}=c_{w\yb}=0$, and in this case we get, for some new constants $c'_{x\xb}, \dots, c'_{y\wb}$,
\begin{align*}
\eta=& (c_x-c_{y\yb})\cdot z\yb-c_{y\xb}\cdot z\xb
-c_{y\zb}\cdot z\zb-c_{y\wb}\cdot z\wb-c_{z\xb}\cdot w\xb-c_{z\yb}\cdot w\yb \\
&+c'_{x\xb}\cdot x\xb+c'_{x\yb}\cdot x\yb+c'_{x\zb}\cdot x\zb+c'_{x\wb}\cdot x\wb
+c'_{y\xb}\cdot y\xb+c'_{y\yb}\cdot y\yb+c'_{y\zb}\cdot y\zb+c'_{y\wb}\cdot y\wb
\end{align*}

Similarly, we solve $\del\delbar\mu= \ga\cdot \de$ and $\del\delbar\nu= \de\cdot \al $ with
\begin{align*}
\mu=& z\zb+\ct_{x\xb}\cdot x\xb+\ct_{x\yb}\cdot x\yb+\ct_{x\zb}\cdot x\zb
+\ct_{x\wb}\cdot x\wb+\ct_{y\xb}\cdot y\xb+\ct_{y\yb}\cdot y\yb \\ 
&+\ct_{y\zb}\cdot y\zb+\ct_{y\wb}\cdot y\wb+\ct_{z\xb}\cdot z\xb
+\ct_{z\yb}\cdot z\yb+\ct_{w\xb}\cdot w\xb+\ct_{w\yb}\cdot w\yb \\
\nu=& \ct_{x}\cdot x+\ct_{y}\cdot y+\ct_{z}\cdot z+\ct_{w}\cdot w
\end{align*}
now with some more constants $\ct_{x\xb},\dots,\ct_w$. Now, in $\del\varrho+\delbar\varrho'= \mu\cdot \al-\ga\cdot \nu$, we get, as before, that $\varrho'\in A^{2,0}$ is given as any linear combination in $A^{2,0}$, i.e., for some constants $\ct_{xy},\dots,\ct_{zw}$,
\[
\varrho'=\ct_{xy}\cdot xy+\ct_{xz}\cdot xz+\ct_{xw}\cdot xw+\ct_{yz}\cdot yz+ \ct_{yw}\cdot yw+\ct_{zw}\cdot zw.
\]
Next, we can solve for $\varrho\in A^{1,1}$ only when $\ct_z=\ct_w=\ct_{w\xb}=\ct_{w\yb}=0$, in which case we get
\begin{align*}
\varrho=& w\zb+(-\ct_x+\ct_{y\yb})\cdot z\yb+\ct_{y\xb}\cdot z\xb
+\ct_{y\zb}\cdot z\zb+\ct_{y\wb}\cdot z\wb+\ct_{z\xb}\cdot w\xb+\ct_{z\yb}\cdot w\yb \\
&+\ct'_{x\xb}\cdot x\xb+\ct'_{x\yb}\cdot x\yb+\ct'_{x\zb}\cdot x\zb+\ct'_{x\wb}\cdot x\wb
+\ct'_{y\xb}\cdot y\xb+\ct'_{y\yb}\cdot y\yb+\ct'_{y\zb}\cdot y\zb+\ct'_{y\wb}\cdot y\wb
\end{align*}
with some constants $\ct'_{x\xb},\dots, \ct'_{y\wb}$.

With this we compute $\mC_S=(\mC\uo_S, \mC\ut_S)\in A^{2,2}\oplus A^{3,1}$ from \eqref{EQU:CS^1} and \eqref{EQU:CS^2}, and we get that the set of all these $\mC_S$ is equal to $(xw\yb\zb,0)+V^{(1)}\oplus V^{(2)}$, where
\begin{align*}
V^{(1)}&=span( xy\xb \yb, xy\xb \zb, xy\xb \wb, xy\yb \zb, xy\yb \wb, xz\xb \yb, xz\xb \zb, xz\xb \wb, xz\yb \zb, xz\yb \wb, xw\xb \yb ) \\
V^{(2)}&=span(xyz\xb, xyw\xb, xzw\xb, xyz\yb, xyw\yb, xzw\yb)
\end{align*}
Note, that the induced classes $[\mC_S]$ cannot be zero in $H^{-1}(\mcS_{4,3}(A))$, since, as noted above, $xw\yb\zb$ is not an element of $\del(A^{1,2})+\delbar(A^{2,1})$.

This shows that $\laa x, x\yb, y\yb, x\xb \raa$ is non-trivial. Moreover, using \eqref{EQU:DeRham-inclusion}, we see that the cyclic Massey product $\la x, x\yb, y\yb, x\xb \ra$ is trivial, since $(\del-\delbar)(xw\yb\zb)=0$ and so it contains $0$.
\end{example}

\bibliographystyle{alpha}
\bibliography{biblio}

\begin{thebibliography}{PTW25}

\bibitem[AT15]{AT}
Daniele Angella and Adriano Tomassini.
\newblock On {B}ott-{C}hern cohomology and formality.
\newblock {\em J. Geom. Phys.}, 93:52--61, 2015.

\bibitem[Mas58]{Mas58}
W.~S. Massey.
\newblock Some higher order cohomology operations.
\newblock In {\em Symposium internacional de topolog\'ia algebraica
  {I}nternational symposium on algebraic topology}, pages 145--154. Universidad
  Nacional Aut\'onoma de M\'exico and UNESCO, M\'exico, 1958.

\bibitem[Mas69]{Mas68}
W.~S. Massey.
\newblock Higher order linking numbers.
\newblock In {\em Conf. on {A}lgebraic {T}opology ({U}niv. of {I}llinois at
  {C}hicago {C}ircle, {C}hicago, {I}ll., 1968)}, pages 174--205. University of
  Illinois at Chicago Circle, Chicago, IL, 1969.

\bibitem[MS24]{SM}
Aleksandar Milivojevi\'c and Jonas Stelzig.
\newblock Bigraded notions of formality and {A}eppli-{B}ott-{C}hern-{M}assey
  products.
\newblock {\em Comm. Anal. Geom.}, 32(10):2901--2933, 2024.

\bibitem[PTW25]{PTW}
Kate Poirier, Thomas Tradler, and Scott~O. Wilson.
\newblock Massey products for homotopy inner products.
\newblock {\em Preprint arXiv:2507.15494}, 2025.

\end{thebibliography}

\end{document}